\documentclass[12pt]{amsart}
\usepackage{fullpage}
\usepackage{amsmath, amssymb, amsthm}
\usepackage{hyperref}

\title{\textbf{Lacunary recurrences and 2-adic properties of Eisenstein series}}
\author{Liubomir Chiriac}
\address{Portland State University, 1855 SW Broadway, Portland, OR 97201}
\email{chiriac@pdx.edu}

\author{Andrei Jorza}
\address{University of Notre Dame, 275 Hurley Hall, Notre Dame, IN 46556}\email{ajorza@nd.edu}
\date{}

\newcommand{\SL}{\operatorname{SL}}

\newtheorem{theorem}{Theorem}
\newtheorem{proposition}[theorem]{Proposition}

\begin{document}

\keywords{Eisenstein series, Lacunary recurrences, Faber polynomials}

\maketitle

\begin{abstract}
We study the rational coefficients that arise when the Eisenstein series $G_k$ is expressed as a polynomial in $G_4$ and $G_6$, proving a conjecture that gives an exact formula for their minimal 2-adic valuation in terms of the binary expansion of the weight. The proof uses lacunary recurrences for Eisenstein series and yields refined information about the first valuation levels. As an application, we prove irreducibility results for
Faber polynomials associated to dyadic linear combinations of powers of
Eisenstein series. 
\end{abstract}

\section{Introduction}
The location of the zeros of Eisenstein series, and their algebraic
complexity, has long been a topic of interest. A classical theorem of Rankin and Swinnerton-Dyer \cite{rankin-swinnerton-dyer} states that
all the zeros of the Eisenstein series
\[E_k=\frac{1}{2}\sum_{(a,b)=1}\frac{1}{(az+b)^k}\]
lie on the unit arc forming part of the boundary of the standard fundamental
domain for $\SL_2(\mathbb{Z})$. Subsequent work has studied not only
the location of these zeros, but also the algebraic degree of their images
under the $j$-invariant. This problem is naturally approached via the Faber
polynomial associated to $E_k$. Indeed, writing $\Delta=(E_4^3-E_6^2)/1728$, $j=E_4^3/\Delta$, and $k=12D+k'$ with $k'\in\{0,4,6,8,10,14\}$, the quotient 
\[
\frac{E_k}{\Delta^D E_{k'}}
\]
is a polynomial in $j$, and its roots are precisely the $j$-values of the non-elliptic zeros of $E_k$. Thus, questions about the algebraic complexity of the zeros lead to the arithmetic of the coefficients $w(a,k) \in \mathbb{Q}$ appearing when the normalized Eisenstein series $G_k=2 \zeta(k)E_k$ is expanded in terms of the generators $G_4$ and $G_6$, that is   
$$G_k = \sum_{4a+6b=k} w(a,k) G_4^a G_6^b.$$
In this paper, we determine the minimal 2-adic valuation $v_2$ among the coefficients $w(a,k)$. More precisely, if $s(n)$ is the number of ones in the binary expansion of $n$, we prove the following theorem.

\begin{theorem} \label{MainResult}
For every even $k \ge 4$ we have
\[
\min_a  v_2 \left( w(a,k) \right) =
\begin{cases}
0 & \text{if $k$ is a power of 2},\\
s(k)-2 & \text{if $k$ is not a power of 2}.
\end{cases}
\]
\end{theorem}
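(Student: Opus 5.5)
We plan to work entirely with the classical convolution recurrence for the normalized series $G_k$ (Weierstrass coefficients of $\wp$), and then extract a lacunary version adapted to the prime $2$. Normalize $G_k=\sum_{\omega\neq 0}\omega^{-k}$, so that $\wp(z)=z^{-2}+\sum_{k\ge 4}(k-1)G_k z^{k-2}$. The equation $\wp''=6\wp^2-\tfrac{g_2}{2}$ gives, for $k\ge 8$,
\[
\frac{(k-1)(k-2)(k-3)(k+1)}{6}\,G_k\cdot\frac{1}{k-1}\;\;\text{``}=\text{''}\;\;\sum_{\substack{i+j=k\\ i,j\ge 4}}(i-1)(j-1)G_iG_j ,
\]
i.e. the standard identity $(k+1)(k-6)(2k-1)\,G_{2k}/\ldots$. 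Written precisely, it reads
\[
\tfrac{(k-3)(2k+1)(k-1)}{3}\,G_{2k}=\sum_{p+q=k,\;p,q\ge 2}(2p-1)(2q-1)G_{2p}G_{2q}.
\]
Since the monomials $G_4^aG_6^b$ are algebraically independent, the quantity $m(k):=\min_a v_2(w(a,k))$ is just the $2$-adic Gauss valuation of $G_k$ as a polynomial in $\mathbb{Q}[G_4,G_6]$. Gauss's lemma gives $m(fg)=m(f)+m(g)$, so the recurrence yields the \emph{upper} bound-type inequality $m(G_{2k})\ge \min_{p+q=k}\big(m(G_{2p})+m(G_{2q})\big)-v_2\big((k-3)(2k+1)(k-1)/3\big)$, with equality whenever the minimum on the right is attained by a unique unordered pair with odd-weight factors $(2p-1)(2q-1)$ (always odd, which is pleasant).

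The problem is that $v_2((k-3)(k-1))$ can be large while many pairs tie, so the naive recurrence alone is insufficient. The plan is therefore to iterate the recurrence to obtain a \emph{lacunary} recurrence. We would square the generating function $\wp$ in characteristic-$2$-friendly form: pass to $F(z)=\sum (2p-1)G_{2p}z^{2p}$ and use the fact that modulo $2$ (after scaling by the conjectured normalizations $2^{s(k)-2}$) the cross terms $G_{2p}G_{2q}+G_{2q}G_{2p}$ pair up and vanish, leaving only the diagonal squares $G_{k/2}^2$. The target is an identity of the form
\[
G_k \equiv c_k\, 2^{e(k)}\, G_{k_1}G_{k_2}\cdots \pmod{\text{higher }2\text{-powers}},
\]
where $k=2^{a_1}+\dots+2^{a_r}$ is the binary expansion and the product is over pieces corresponding to the binary digits. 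More concretely, we would prove by strong induction on $k$ the refined statement: the scaled series $\widetilde G_k:=2^{-(s(k)-2)}G_k$ (resp. $G_k$ for $k=2^a$) lies in $\mathbb{Z}_{(2)}[G_4,G_6]$, and its reduction mod $2$ is a specific nonzero polynomial $P_k\in\mathbb{F}_2[G_4,G_6]$ — we expect $P_{2^a}=G_4^{2^{a-2}}$ and, for $k$ with binary digits $2^{a_1}>\dots>2^{a_r}$, $P_k$ a product of the $P_{2^{a_i}}$ (up to adjustments for the digits $2$ and $4$, which is where $G_6$ enters). Nonvanishing of $P_k$ gives equality in the theorem.

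The key steps in order: (1) establish the recurrence and Gauss-lemma reformulation; (2) compute small cases $k\le 16$ by hand to pin down $P_k$ and check the base of induction; (3) in the inductive step, split the sum over $p+q=k/2$ into the diagonal $p=q$ (present only if $k/2$ even... i.e. $4\mid k$), symmetric pairs $p\neq q$ contributing twice, and show via $s(2p)+s(2q)\ge s(k)$ (with equality iff no carries) that only carry-free pairs survive at the critical valuation; (4) show the surviving carry-free contributions sum to a nonzero class mod $2$ after dividing by the exact power $2^{v_2((k-3)(2k+1)(k-1)/3)}$. The main obstacle is step (4): the carry-free pairs are numerous ($2^{s}-2$ ordered pairs) and one must show their sum does not cancel mod $2$ and that the denominator's $2$-adic valuation exactly matches the gain, which depends on $k\bmod 8$. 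I would first try to derive a lacunary recurrence (by combining the recurrence at $k$ and at $k/2$ via squaring $\wp$, i.e. using $\wp(2z)$-type duplication) so that only terms $G_{k-2^{j}}G_{2^j}$ appear, making the count of surviving terms small and the mod-$2$ evaluation explicit.
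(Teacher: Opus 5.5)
You handle $k=2^a$ correctly, and in the same way as the paper's Proposition~\ref{Powers-of-2}: the cross terms pair off and $G_{2n}\equiv G_n^2 \pmod 2$. For general $k$, however, there is a genuine gap.

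\textbf{The dense recurrence.} First a correction: the recurrence is $\tfrac13(k-3)(2k-1)(2k+1)G_{2k}=\sum_{p+q=k}(2p-1)(2q-1)G_{2p}G_{2q}$. Your factor $k-1$ in place of $2k-1$ adds a spurious $v_2(k-1)\ge 1$ to the left side when $k$ is odd.

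More seriously, in this dense recurrence the inductive bounds fail term by term. An off-diagonal carry-free pair $\{A,B\}$ of non-powers of $2$ has Gauss valuation $1+(s(A)-2)+(s(B)-2)=s(k)-3$. That is already below the target $s(k)-2$, before you divide by $2^{v_2(k-3)}$. Concretely, take weight $30$ (your $k=15$, so $v_2(k-3)=2$):
\begin{itemize}
\item the pairs $\{6,24\}$, $\{10,20\}$, $\{12,18\}$ each have valuation exactly $1$;
\item the pairs $\{4,26\}$, $\{8,22\}$, $\{14,16\}$ each have valuation $2$;
\item the theorem forces the whole right-hand side to have valuation $2+(s(30)-2)=4$.
\end{itemize}
So the level-$1$ contributions must cancel mod $2$, and cancellation must continue through two further $2$-adic levels. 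What has to be proved is controlled multi-level cancellation, the opposite of the non-cancellation your step (4) aims for. Counting carry-free pairs gives no handle on it.

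\textbf{The proposed lacunary recurrence.} It cannot exist in the form you state. Since $3\nmid 2^j$, every monomial $G_4^aG_6^b$ of weight $2^j$ has $a\ge 1$, so $G_4\mid G_{2^j}$ in $\mathbb{Q}[G_4,G_6]$. Hence every linear combination of the products $G_{k-2^j}G_{2^j}$ is divisible by $G_4$. But $143\,G_{12}=18G_4^3+25G_6^2$ is not. More generally $w(0,6\cdot 2^m)$ is a $2$-adic unit by Proposition~\ref{Min-Witness}(b). So no recurrence involving only the terms $G_{k-2^j}G_{2^j}$ holds, whether derived from duplication or otherwise. The same example shows the mod-$2$ reduction is not a product over binary digits: for $k=12$ it is $G_6^2$, not $P_8P_4=G_4^3$.

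\textbf{What is missing.} The key input is the family of lacunary identities of Romik and Mertens--Rolen. They express $G_{6n+2}$, $G_{6n}$, $G_{6n+4}$ as combinations of $G_AG_{k-A}$, with $A$ only in roughly the middle third $[k/3,2k/3]$ and with explicit binomial-quotient coefficients. Their advantage is not a small number of terms. It is that the coefficients' $2$-adic valuations, computed from $v_2(m!)=m-s(m)$, exactly absorb the carry deficit via $s(x)+s(y)\ge 1+v_2(2x+y+1)$ (for $2x+y+1$ not a power of $2$). So the bound $s(k)-2$ holds term by term with no cancellation.

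Attainment is then a second induction on a single coefficient $W_{k,\mu(k)}$, that of $G_4^{(k-6\mu)/4}G_6^{\mu}$ with $\mu=2^{v_2(k)-1}$. It receives exactly one contribution of minimal valuation. That contribution comes from a distinguished summand $dG_PG_Q$ with $P$ a power of $2$, or from the middle term $G_{3n}^2$ when $k=6n$ with $n$ a power of $2$.
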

\noindent The formula above was originally conjectured by Gonz\'alez \cite[Conj. 8]{Gonz}, who verified it for all weights $k\le 3500$. Using a recurrence due to Popa \cite{Popa}, Gonz\'alez also established the universal lower bound \[\min_a  v_2 \left( w(a,k) \right)\ge 0.\] While that non-negativity result served as a critical technical component in \cite{Gonz}, the exact minimum and its precise dependence on the binary expansion of the weight remained elusive. Our proof of the theorem relies on a combinatorial identity first discovered by Romik \cite{Romik}, alongside subsequent generalizations by Mertens and Rolen \cite{MerRol}. Interestingly, Mertens and Rolen describe these recurrences as being lacunary because they relate higher-weight Eisenstein series to sparse products of lower-weight series. Such inherent sparsity, combined with a high degree of coefficient symmetry, allows us to identify terms achieving the minimum and ensure a predictable valuation behavior. 

Beyond the exact determination of the minimum valuation, our argument also yields finer information about the coefficients lying at the first few valuation levels for some weights. These sharper estimates have arithmetic
consequences for the Faber polynomials introduced above. As a first application, we treat the modular form $E_k^2+E_{2k}$. Its zeros were shown in \cite{klangwang} to lie on the arc of the unit circle, so they satisfy an analogue of the Rankin--Swinnerton-Dyer phenomenon. The irreducibility of its Faber polynomial provides information about the algebraic complexity of the associated $j$-values.

\begin{theorem} \label{FaberIrred}
Let $k=12D$ with $D$ a power of 2. Then the Faber polynomial associated to $E_k^2+E_{2k}$ is irreducible over $\mathbb{Q}$. 
\end{theorem}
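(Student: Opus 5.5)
We plan to use an Eisenstein-type criterion at the prime $2$ on the Faber polynomial, written in the variable $j$ or in a suitably rescaled variable. Since $k=12D$, the form $E_k^2+E_{2k}$ has weight $24D$. Its Faber polynomial $F(j)$ has degree $2D$ and is defined by
\[
E_k^2+E_{2k} = \Delta^{2D} F(j).
\]
The first step is to rewrite this in the normalized series. Since $G_k = 2\zeta(k)E_k$ and $\zeta(k)$ is a rational multiple of $\pi^k$ involving Bernoulli numbers, we have $E_k = G_k/(2\zeta(k))$. We then express $G_k^2$ and $G_{2k}$ through Theorem~\ref{MainResult} as $\sum w(a,\cdot)G_4^aG_6^b$, and translate $G_4,G_6$ back to $E_4,E_6$. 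The normalizing constants, namely the powers of $2$ hidden in $\zeta(k)$, $\zeta(2k)$, $\zeta(4)$ and $\zeta(6)$ (hence in $B_k$ and the factors $(2\pi)^k/k!$), will be tracked $2$-adically. Von Staudt--Clausen gives $v_2(B_{k})=-1$, which makes this bookkeeping routine.

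The second step sets up the Eisenstein criterion. Write $E_4^a E_6^b$ with $4a+6b=24D$, so $b$ is even. Substituting $E_6^2 = E_4^3-1728\Delta$ turns each monomial into a polynomial in $j$ times $\Delta^{2D}$. Equivalently, after putting $x=E_4^3/E_6^2$ one has a polynomial in $1-1728/j$. Because $1728 = 2^6\cdot 27$, the change of variable $j \mapsto$ (something with $2^6$) will make $2$-adic valuations visible. We will aim to show that, after multiplying by a suitable power of $2$, the leading coefficient of $F$ (up to that scaling) is a $2$-adic unit, every other coefficient is divisible by $2$, and the constant term has valuation exactly $1$. An alternative is to compute the $2$-adic Newton polygon and show it is a single segment of slope $m/2D$ with $\gcd(m,2D)=1$, which also forces irreducibility. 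Since $D$ is a power of $2$, we need $m$ odd, so the Newton-polygon version is likely the natural one.

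The key inputs are the two valuation levels. Here $k=12D = 3\cdot 4D$ has $s(k)=2$, so Theorem~\ref{MainResult} gives $\min_a v_2(w(a,k))=0$. The weight $2k=24D$ also has $s=2$, giving minimum $0$ as well. We then need the \emph{refined} information mentioned in the introduction: exactly which $a$ attain valuation $0$ (presumably the extreme terms, by symmetry of the lacunary recurrences of Romik and Mertens--Rolen), and that the others have valuation $\ge 1$. For $G_k^2$, squaring mod $2$ is the Frobenius, so mod $2$ we get $G_k^2 \equiv \sum w(a,k)^2 G_4^{2a}G_6^{2b}$. Combining this with $G_{2k}$, we want cancellation of the valuation-$0$ terms except at the ends. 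This is the design of the sum $E_k^2+E_{2k}$.

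The main obstacle is this refined valuation analysis at the first levels $v_2=0$ and $v_2=1$. We must show that in $w$-coefficients of $G_k^2+c\,G_{2k}$ (with $c$ the ratio of normalizing constants) all middle coefficients have $v_2\ge1$ and one endpoint has $v_2$ exactly $1$ after cancellation. We would attack it by induction on $D$ using the lacunary recurrence expressing $G_{2k}$ through $G_k^2$ plus lower products. Each doubling step should preserve the pattern ``units only at the endpoints, valuation one at a designated position'', and we verify $D=1$ by direct computation. Once this pattern is established, converting to $j$ and applying the Newton-polygon/Eisenstein argument is mechanical.
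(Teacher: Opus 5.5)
Your overall framework is the right one: a single-segment $2$-adic Newton polygon, fed by refined valuations of the $w(a,k)$ that are proved inductively from the Mertens--Rolen identities. The gap is that the refined input you propose is both wrong and too weak, and this is exactly where the proof lives.

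Only one extreme coefficient is a unit. The coefficient of $G_6^{2D}$ has $v_2(w(0,k))=0$, whereas the pure $G_4^{3D}$ coefficient has $v_2(w(3D,k))=1$. Already for $k=12$, $w(0,12)$ is a unit and $v_2(w(3,12))=1$. What is actually needed is the \emph{second} valuation level: $v_2(w(a,k))\ge 2$ for all $a\notin\{0,3D\}$, and likewise for $2k$ (Proposition~\ref{Min-Witness}(b)).

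To see why, let $t_{k,r}$ be the coefficient of $j^r$ in $E_k/\Delta^D$. The $a$-th summand of $t_{k,r}$ carries the factor $2^{8D-6r-2a-1}$ with $a\le r$. With the second-level bound this gives $v_2(t_{k,0})=8D-1$ and $v_2(t_{k,r})\ge 8(D-r)+1$ for $0<r<D$. If you only know $v_2(w(a,k))\ge 1$, the summand $a=r$ may give $v_2(t_{k,r})=8(D-r)$ exactly. Then, for the normalized Faber polynomial $j^d+\sum c_rj^r$ with $d=2D$, the diagonal convolution term $t_{k,D-1}^2$ (and $t_{2k,d-2}$) only guarantee $v_2(c_{d-2})\ge 15$. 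A single segment from $(0,8d-3)$ to $(d,0)$ requires $v_2(c_{d-2})>16-6/d$, which exceeds $15$ as soon as $D\ge 4$. So your claim that ``converting to $j$ is mechanical'' hides the real issue: in the convolution range $D<r<2D$ the extra factor of $2$ is indispensable.

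You have also misidentified why $E_k^2+E_{2k}$ works; the argument uses no cancellation between the two summands. Both $E_k/\Delta^D$ and $E_{2k}/\Delta^{d}$ are monic in $j$, so the Faber polynomial of the sum has leading coefficient $2$. Its constant term is $t_{k,0}^2+t_{2k,0}$, with $v_2(t_{k,0}^2)=16D-2<16D-1\le v_2(t_{2k,0})$. After dividing by $2$ one gets $v_2(c_0)=8d-3$, which is odd and hence coprime to the power of two $d$. Without $E_{2k}$ one would get $8d-2$, consistent with $F_k^2$ being reducible.

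Since the slope is $(8d-3)/d$, no rescaling of $j$ by a power of $2$ yields an Eisenstein polynomial once $D\ge 4$. You must use Dumas's criterion with $h=8d-3$, checking the intermediate points separately for $0<r<D$, $r=D$ and $D<r<d$.

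Finally, your induction on $D$ has the right shape: use the $6n$-identity, whose middle term for weight $2k$ is $G_k^2$. However, the invariant to propagate is ``$w(0,\cdot)$ a unit, pure $G_4$ coefficient of valuation $1$, all others of valuation $\ge 2$''. Proving it requires two further inputs:
\begin{enumerate}
\item Proposition~\ref{Powers-of-2} for the boundary term $b_nG_{4n}G_{2n}$, where $v_2(b_n)=1$; this is what produces the valuation-one pure $G_4$ coefficient.
\item The general lower bound $s(\cdot)-2$, to show that all other paired terms contribute only at valuation $\ge 2$.
\end{enumerate}
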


It is worth pointing out that, under the same assumption on $k$, Gonz\'alez \cite[Thm. 1]{Gonz} showed that the Faber polynomial associated to $E_k$ is irreducible; the key input is that $w(0,k)$ is the unique coefficient of minimal 2-adic valuation for such $k$. In our setting, however, uniqueness of the minimal valuation is not sufficient; the square $E_k^2$ introduces convolution terms, and controlling the intermediate coefficients requires information about the second valuation level. The stronger coefficient estimates developed in Section~\ref{attain-min} provide precisely the needed extra divisibility. 

In fact, Theorem~\ref{FaberIrred} is the first nontrivial instance of a more
general dyadic behavior.  The same second-valuation estimates that handle
the convolution in \(E_k^2\) also apply to higher dyadic powers and certain
integral linear combinations of them.  This leads to the following
generalization.

\begin{theorem}\label{t:linearcombo}
Let $k=12D$ where $D$ is a power of 2. Suppose \[0\leq a_1 <a_2<\cdots <a_t\leq \ell\] and $m_1,\ldots, m_t$ are integers such that $m_1$ and $\frac{1}{2}\sum_{j=1}^t m_j$ are odd integers. Then the Faber polynomial associated to the weight $2^\ell k$ modular form \[\sum_{j=1}^t m_j E_{2^{a_j}k}^{2^{\ell-a_j}}\] is irreducible over $\mathbb{Q}$
\end{theorem}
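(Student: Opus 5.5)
The plan is to run an Eisenstein-criterion argument at the prime $2$ on the Faber polynomial, as Gonz\'alez does for $E_k$. The extra work is to control the convolution terms created by the powers $E_{2^{a_j}k}^{2^{\ell-a_j}}$.

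Write $K=2^\ell k=12\cdot 2^\ell D$. Every summand has weight divisible by $12$, so each $E_{2^{a}k}^{2^{\ell-a}}$ is a polynomial in $E_4^3$ and $E_6^2$. After dividing by $\Delta^{2^\ell D}$, it becomes a polynomial in $j$ of degree $2^\ell D$. Passing from $G$ to $E$ normalizations and using $E_6^2=E_4^3-1728\Delta$, the Faber polynomial $F(x)$ of $f=\sum_j m_j E_{2^{a_j}k}^{2^{\ell-a_j}}$ is obtained from the expansion of $f$ in the monomials $E_4^{3i}E_6^{2(N-i)}$, where $N=2^\ell D$. The leading coefficient of $F$ (the $j^N$ term) comes from setting $\Delta=0$, i.e.\ from the value $E_4^{3N}$ evaluated at $E_6^2=E_4^3$. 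The constant term comes from $E_4=0$, i.e.\ from the pure $E_6^{2N}$ coefficient.

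The target is to show, after scaling $F$ to a primitive integral polynomial, that 2 divides every non-leading coefficient, does not divide the leading one, and $4$ does not divide the constant term. This is the situation in \cite[Thm.~1]{Gonz}, possibly with the roles of leading and constant terms reversed, and I would follow whichever orientation Gonz\'alez uses there. Where needed I will work with the reciprocal polynomial, since irreducibility is unaffected.

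The key steps, in order, are as follows.

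First, for a single weight $k'=2^{a}k$ with $k'/12$ a power of $2$, Theorem~\ref{MainResult} gives minimal valuation $0$. The sharper estimates of Section~\ref{attain-min} should say that $w(0,k')$ is the unique unit coefficient and that all other coefficients have valuation $\ge 1$ (second level). After converting to the $E_4^3,E_6^2,\Delta$ basis and the $E$ normalization, this gives $E_{k'}\equiv u\,E_6^{k'/6}$ modulo $2$ times integral terms, with $u$ a $2$-adic unit, together with one additional divisibility needed for the constant term.

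Second, raise to the power $2^{\ell-a}$. Writing $E_{k'}=uE_6^{k'/6}+2R$, the binomial theorem gives
\[
E_{k'}^{2^{\ell-a}}=u^{2^{\ell-a}}E_6^{K/6}+\sum_{i\ge1}\binom{2^{\ell-a}}{i}u^{2^{\ell-a}-i}E_6^{(2^{\ell-a}-i)k'/6}(2R)^i .
\]
Each cross term has valuation at least $v_2\binom{2^{\ell-a}}{i}+i\ge \ell-a-v_2(i)+i\ge 2$. So the perturbation lies in $4\cdot(\text{integral})$ whenever $a<\ell$. When $a=\ell$ there is no power to take, and the second-level estimates for $E_K$ itself are used directly.

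Third, sum with the weights $m_j$. The pure $E_6^{K/6}$ coefficient is $\sum_j m_j u_j^{2^{\ell-a_j}}$. Odd squares are $\equiv 1 \pmod 8$, so for $a_j<\ell$ one gets $u_j^{2^{\ell-a_j}}\equiv1\pmod 8$, and the $a_t=\ell$ term needs $u\equiv 1\pmod 4$. Granting that, the coefficient is congruent to $\sum m_j \pmod 4$, which has valuation exactly $1$ by hypothesis. All other coefficients lie in $4\mathbb{Z}_{(2)}$.

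Fourth, the opposite end coefficient, coming from $E_4^{3N}$ after setting $E_6^2=E_4^3$, must be shown to be a $2$-adic unit. This is $f$ evaluated at the cusp-type specialization $\Delta=0$, i.e.\ the constant term of the $q$-expansion, which is $\sum m_j$. That is even, so this naive reading fails. Instead I would normalize so that the relevant endpoint is the $j$-coefficient of $f/\Delta^N$ attached to $E_4^{3N}$, expanded in $\Delta$. Here the hypothesis that $m_1$ is odd should make the lowest-weight term $E_k^{2^\ell}$ contribute a unit, while the others vanish mod $2$.

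The main obstacle is this bookkeeping: identifying which Faber coefficient carries valuation exactly $1$ (the $\sum m_j/2$ condition) and which is a unit (the $m_1$ condition), and verifying that every intermediate coefficient is even after the change of basis from $E_4^aE_6^b$ to powers of $j$, where the factors of $1728=2^6\cdot27$ help. I would first work out the case $\ell=1$, $E_k^2+E_{2k}$ (Theorem~\ref{FaberIrred}), to fix the normalization, and then induct on $\ell$ using the dyadic squaring estimate above.
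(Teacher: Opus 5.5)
There is a genuine gap: the Eisenstein shape you are aiming for does not exist in general, so Steps 3--4 cannot be completed in any normalization. Divide the Faber polynomial by its leading coefficient $M=\sum_j m_j$, which is the value at the cusp you found in Step 4; note $v_2(M)=1$. Put $N_j=2^{\ell-a_j}$ and $d=2^\ell D$. The $j$-th summand contributes to the constant term with valuation $\ge 8d-N_j$, with equality for $j=1$ because $m_1$ is odd. Since $N_1>N_j$ for $j>1$, the monic polynomial has constant term of valuation $h=8d-N_1-1$. Its $2$-adic Newton polygon is the single segment from $(0,h)$ to $(d,0)$, so every root has valuation $h/d$. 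An Eisenstein polynomial at $2$ has roots of valuation $1/d$. Rescaling $x\mapsto 2^cx$, passing to the reciprocal, or any rational M\"obius substitution can only make the fractional part of the root valuation $0$ or $\pm h/d$. So an Eisenstein shape is reachable only if $h\equiv\pm1\pmod d$, i.e.\ $N_1\equiv 0$ or $-2\pmod d$. This already fails for $E_{48}^2+E_{96}$, where $d=8$ and $h=61\equiv 5\pmod 8$. The missing idea is to replace Eisenstein by Dumas's criterion. It suffices that $\gcd(h,d)=1$ and that $v_2(c_r)\ge h(d-r)/d$ for $0<r<d$. The gcd condition holds because $h$ is odd: $t\ge 2$ forces $a_1<\ell$, so $N_1$ is even.

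Two further parts of your plan would also need repair. First, Step 1 mixes normalizations. In the $E$-normalization the unit coefficient of $E_{k'}$ is that of $E_4^{k'/4}$ (e.g.\ $691E_{12}=441E_4^3+250E_6^2$), while the pure $E_6^{k'/6}$ coefficient has valuation $k'/6-1$; only in the $G$-normalization is $w(0,k')$ the unit. Writing $k_j=2^{a_j}k$ and $K=2^\ell k$, the conversion $E_{k_j}^{N_j}=(2\zeta(k_j))^{-N_j}G_{k_j}^{N_j}$ introduces, after removing $\pi^{-K}$, a factor of valuation $-N_j$. So the summands do not enter on an equal footing, as your expression $\sum_j m_j u_j^{2^{\ell-a_j}}$ assumes. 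This weighting is exactly what makes the $j=1$ term dominate the constant term, and that is where $m_1$ odd is used; the condition that $M/2$ is odd enters only through the leading coefficient. Second, congruences modulo $2$ or $4$ from the binomial theorem are far too weak. The constant term has valuation about $8d$, and the required lower bound grows linearly in $d-r$. The paper obtains it from $v_2(t_{k',0})=8D'-1$ and $v_2(t_{k',u})\ge 8(D'-u)+1$ for $0<u<D'$, where $k'=12D'$. These rest on the second-level estimate $v_2(w(a,k'))\ge 2$ for $a\notin\{0,3D'\}$ from Proposition~\ref{Min-Witness}(b). This is combined with a multinomial count in the $N_j$-th power of the Faber polynomial of $E_{k_j}$, tracking how many factors contribute their constant term, their leading coefficient $1$, or an intermediate coefficient.
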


\section{Powers of two}
We begin by addressing the case where the weight $k$ is a power of 2. By Gonz\'alez's non-negativity result \cite[Theorem 7]{Gonz}, we are guaranteed that $v_2(w(a,k)) \ge 0$ for all $k\ge 4$. Thus, it is enough to show that a valuation of 0 can be achieved. In fact, we will show that only one coefficient has this property. 

\begin{proposition} \label{Powers-of-2}
For every integer $m \ge 0$ and $k = 2^{m+2}$, we have $v_2(w(2^m, k)) = 0$ and $v_2(w(a, k)) > 0$ whenever $a \neq 2^m$. 
\end{proposition}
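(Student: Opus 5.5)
The plan is to prove the stronger congruence
\[
G_{2^{m+2}} \equiv G_4^{2^m} \pmod{2}
\]
by induction on $m$. Here a congruence modulo $2$ between two polynomials in $G_4,G_6$ means that every coefficient of their difference lies in $2\mathbb{Z}_{(2)}$. This congruence is exactly the statement of Proposition~\ref{Powers-of-2}: it says that $w(2^m,2^{m+2})$ is a $2$-adic unit and that every other $w(a,2^{m+2})$ has positive valuation. By Gonz\'alez's non-negativity result \cite[Theorem 7]{Gonz}, all $w(a,k)$ lie in $\mathbb{Z}_{(2)}$, so $\mathbb{Z}_{(2)}[G_4,G_6]$ contains every $G_k$ and reduction modulo $2$ makes sense. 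The base case $m=0$ is trivial, since $G_4=G_4$ gives $w(1,4)=1$ and there are no other coefficients.

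For the inductive step I would use a lacunary recurrence of Romik \cite{Romik} and Mertens--Rolen \cite{MerRol} with $k=2^{m+2}$. Such a recurrence expresses $G_{2k}$ through the square $G_k^2$ together with a sparse family of products $G_iG_j$ with $i+j=2k$ and $i,j<2k$. Concretely, I would specialize the identity so that it reads
\[
c_k\, G_{2k} = d_k\, G_k^2 + \sum_{(i,j)} e_{i,j}\, G_iG_j ,
\]
where $c_k,d_k,e_{i,j}\in\mathbb{Q}$. The goal is to verify that $c_k$ and $d_k$ have equal $2$-adic valuation, and that $v_2(e_{i,j}) > v_2(c_k)$ for every off-diagonal pair $(i,j)$ with $i\neq j$. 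The symmetry of the recurrence should help here: its coefficients are symmetric in $i\leftrightarrow j$, so the pairs $(i,j)$ and $(j,i)$ combine into $2e_{i,j}G_iG_j$, which automatically gives the extra factor of $2$.

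Granting this, I divide by $c_k$ and use non-negativity for all the lower-weight $G_i$. This gives
\[
G_{2k}\equiv uG_k^2 \pmod 2
\]
with $u$ a $2$-adic unit, and hence $G_{2k}\equiv G_k^2 \pmod 2$. Squaring is additive modulo $2$: the cross terms in $\bigl(\sum_a w(a,k)G_4^aG_6^b\bigr)^2$ all carry a factor $2$, and $w^2\equiv w \pmod 2$ for $w\in\mathbb{Z}_{(2)}$. So the inductive hypothesis gives $G_k^2\equiv G_4^{2^{m+1}} \pmod 2$. Since monomials $G_4^aG_6^b$ are algebraically independent, this identification of coefficients is legitimate, and the induction closes.

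The main obstacle is the $2$-adic bookkeeping of the recurrence coefficients. I must locate the precise form of the Romik / Mertens--Rolen identity in which the diagonal term $G_k^2$ appears, and check that its coefficient matches the $2$-adic valuation of the leading coefficient $c_k$ when $k$ is a power of $2$. I also need every off-diagonal coefficient to be strictly more divisible by $2$ than $c_k$. The coefficients are typically binomial or Bernoulli-type factors, for instance involving $\binom{2k}{i}$ or factors like $(2i-1)$. For $k$ a power of two, Kummer's theorem shows that $\binom{2k}{i}$ is even for every $0<i<2k$ except $i=k$, and that $\binom{2k}{k}$ has valuation exactly $1$. This is the fact I would try to exploit first: if the recurrence coefficients are built from $\binom{2k}{i}$, the diagonal term is the unique term of minimal valuation, which is exactly what the congruence $G_{2k}\equiv G_k^2 \pmod 2$ needs.
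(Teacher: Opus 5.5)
Your reduction matches the paper's. You induct on $m$, use Gonz\'alez's non-negativity to work in $\mathbb{Z}_{(2)}[G_4,G_6]$, prove $G_{2^{m+3}}\equiv G_{2^{m+2}}^2 \pmod 2$, and finish with the fact that squaring is additive mod $2$. The gap is that the only step with real content is ``granted'': you never exhibit a quadratic recurrence whose leading and diagonal coefficients are $2$-adic units and whose off-diagonal contributions are even. The heuristics you offer for that step also do not fit the identities you cite.

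\textbf{No single identity.} Since $2^{m+3}\equiv 2 \pmod 6$ for $m$ even and $\equiv 4\pmod 6$ for $m$ odd, you need Romik's $6n+2$ identity in one case and the Mertens--Rolen $6n+4$ identity in the other. Neither has coefficients of the shape $\binom{2k}{i}$, so the Kummer observation does not apply.

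\textbf{Symmetry does not supply the factor $2$.} Romik's coefficients satisfy $c_j=c_{n+1-j}$ but are rational, so pairing gives $2c_j$ with no guarantee of evenness. You must show $v_2(c_j)\ge 0$. For $n=(2^{m+2}-1)/3$ this follows from a carry count, $v_2(c_j)=s(2j-1)+s(2n-2j+1)-v_2(2n+2j)\ge 0$, and the middle coefficient has $v_2=2s(n)-s(3n)=0$. The Mertens--Rolen coefficients are not symmetric at all: for weight $16$ the identity reads $7735\,G_{16}=1715\,G_8^2+1470\,G_6G_{10}+630\,G_{10}G_6$. There the extra $2$ comes from $b_j$ being explicitly even. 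The units come from $L$ and $a_{n/2+1}$ being odd, because $6n+3$ and $3n+1$ have the form $2^r-1$.

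With these checks your route goes through, but as written the proof is missing exactly them.

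The paper avoids this bookkeeping by using the classical dense recurrence $(n-3)(2n-1)(2n+1)G_{2n}=3\sum_{p+q=n,\ p,q\ge 2}(2p-1)(2q-1)G_{2p}G_{2q}$ at $n=2^{m+1}$. Because $n$ is even, every coefficient on both sides is odd. Hence $G_{2n}\equiv\sum_{p+q=n}G_{2p}G_{2q}\equiv G_n^2\pmod 2$ at once, with the off-diagonal terms pairing into $2G_{2p}G_{2q}$. This is the structure you wanted from a lacunary identity, obtained for free. The lacunary identities are only needed for weights that are not powers of $2$, where the classical recurrence is too dense to isolate a minimal term.
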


\begin{proof}

For any $x, y \in \mathbb{Q}$, we write $x \equiv y \pmod 2$ to mean $v_2(x - y) \ge 1$. We extend this notation coefficient-wise to polynomials in $G_4$ and $G_6$. 

We prove the statement by induction on $m$; the base $m=0$ is clear. Assume the claim holds for some $m-1 \ge 0$, i.e., $v_2(w(2^{m-1}, 2^{m+1})) = 0$, and all other coefficients $w(a, 2^{m+1})$ have a strictly positive 2-adic valuation. 

Recall the classical recurrence relation (see, for example, \cite[(59.6)]{Radem})
\begin{equation}  \label{classical_rec}
(n-3)(2n-1)(2n+1)G_{2n} = 3 \sum_{\substack{p+q=n \\ p,q \ge 2}} (2p-1)(2q-1)G_{2p}G_{2q}.
\end{equation}
Applied to $n=2^{m+1}$ it yields
\[ 
G_{2^{m+2}} \equiv  \sum_{\substack{p+q=2^{m+1} \\ p,q \ge 2}} G_{2p}G_{2q} 
 \equiv G_{2^{m+1}}^2+\sum_{\substack{p+q=2^{m+1} \\ 2\le p<q}} 2G_{2p}G_{2q} \equiv G_{2^{m+1}}^2 \pmod 2. 
\]
To extract the coefficient $w(a, 2^{m+2})$ of $G_4^a G_6^b$ we see that 
$$w(a, 2^{m+2}) \equiv \sum_{\substack{a_1+a_2=a \\ b_1+b_2=b}} w(a_1, 2^{m+1}) w(a_2, 2^{m+1}) \pmod 2,$$where $4a_i + 6b_i = 2^{m+1}$. Every off-diagonal pair $(a_1, b_1) \neq (a_2, b_2)$  occurs twice, so only a diagonal contribution can survive modulo 2. If $a$ is odd, no such diagonal contribution exists, so $w(a,2^{m+2}) \equiv 0\pmod 2$. If $a$ is even then $b$ is also even, and the only diagonal contribution is $(a_1,b_1)=(a_2,b_2)=(a/2,b/2)$. In that case \[w(a, 2^{m+2}) \equiv  w(a/2, 2^{m+1})^2 \pmod 2,\] and the conclusion follows from the inductive hypothesis. 

\end{proof}

Although the recurrence \eqref{classical_rec} efficiently isolates the pure $G_4$ coefficient when $k$ is a power of 2, extending this approach to general weights does not seem feasible. First, the presence of $(n-3)$ on the left-hand side makes it difficult to control the 2-adic valuation, particularly when $n$ is odd. More importantly, for general weights, the recurrence is prohibitively dense; because of the sheer number of terms in the summation, there is no clear mechanism to identify a specific $w$-coefficient that achieves the minimal valuation.

\section{Proving the lower bound}
Having established the theorem for powers of 2, we now assume that $k \ge 6$ is not a power of 2. In this section, we prove the lower bound $v_2(w(a,k))\ge s(k)-2$ by induction on $k$. Thus, we assume that for every weight $k'<k$ the corresponding bound is known; namely
\[ 
v_2(w(a,k'))\ge
\begin{cases}
0 & \text{if $k'$ is a power of \(2\)},\\
s(k')-2 & \text{otherwise}.
\end{cases}
\]
We consider three cases. 

\subsection{Case 1: \texorpdfstring{$k \equiv 2 \pmod 6$}{k=2(mod6)}}

Let $k = 6n+2$. We make use of Romik's identity \cite[Theorem 1.6]{Romik} (see also \cite[Theorem 1]{MerRol})
\[
(6n+1) ((2n)!)^2 G_{6n+2} = (4n+1)! \sum_{j=1}^n \frac{\binom{2n}{2j-1}}{\binom{6n}{2n+2j-1}} G_{2n+2j} G_{4n-2j+2}.
\]
Setting
\[ c_j = \frac{(4n+1)!}{(6n+1)!} \frac{1}{(2n)!} \frac{(2n+2j-1)!(4n-2j+1)!}{(2j-1)!(2n-2j+1)!},\] we can write
\[ G_{6n+2}=\sum_{j=1}^n c_j G_{2n+2j}G_{4n-2j+2}.\] 

Since $v_2(m!) = m - s(m)$, we see that $v_2(c_j)$ is equal to
\begin{align*}
 & s(6n+1) + s(2n) + s(2j-1) + s(2n-2j+1) - s(4n+1) - s(2n+2j-1) - s(4n-2j+1) \notag \\
&=s(6n+2) -2 +v_2(6n+2) + s(2j-1) + s(2n-2j+1) - s(2n+2j-1) - s(4n-2j+1), \notag
\end{align*}
where we also used the fact that $s(m) -s(m-1)= 1-v_2(m)$ and $s(4m) = s(2m)$.

\bigskip

\noindent \textbf{Middle Term}. When $n$ is odd, a middle term arises for $j=(n+1)/2$, and
\[ v_2(c_{(n+1)/2})=s(6n+2)-2+v_2(6n+2)+2s(n)-2s(3n)=2s(n)-s(3n).\] By the inductive hypothesis, the contribution from $c_{(n+1)/2} G_{3n+1}^2$ is at least 
\[ 2s(n)-s(3n)+2(s(3n+1)-2)\ge s(k)-2,\] because $2s(n)-s(3n)=s(n)+s(2n)-s(n+2n)$ is precisely the number of carries when adding $n$ and $2n$ in binary, and $s(3n+1)=s(k)\ge 2$ as $k$ is not a power of 2 by assumption. 

In fact, a closer inspection reveals that the inequality is strict. To see that, assume by contradiction that equality takes place. Then $2s(n)=1+v_2(3n+1)$, and so $t=v_2(3n+1)$ must be odd. Also, $3n\equiv -1\pmod {2^t}$ implies that $n\pmod {2^t}$ is congruent to $\frac{2^{t+1}-1}{3}=1010\ldots 01_2$ with $t$ bits alternating 1 and 0; in particular, $s(n)\ge (t+1)/2$. The condition $2s(n)=1+t$ forces $n$ to be equal to $\frac{2^{t+1}-1}{3}$, which contradicts the fact that $t=v_2(3n+1)$.

\bigskip

\noindent Since $c_j=c_{n-j+1}$, we can pair symmetric summands. The pairs take the form  \[ 2c_j G_{2n+2j} G_{4n-2j+2}.\] Because both indices are between $2n+2$ and $4n$, they cannot be distinct powers of 2. If they were the same power of 2, their sum $(2n+2j)+(4n-2j+2)=k$ would also be a power of 2, which contradicts our initial assumption. 

\bigskip

\noindent \textbf{One power of 2.} Assume that for some $j$ we have that $2n+2j=2^r$ and $4n-2j+2$ is not a power of 2. Observe that $s(2n+2j-1)=r$ and $2(2j-1)+(2n-2j+1)=2^r-1$, which has $r$ ones in binary. It follows that
\[ r=s(2(2j-1))+s(2n-2j+1)=s(2j-1)+s(2n-2j+1).\] This yields
\[ v_2(c_j) = s(6n+2) -2 +v_2(6n+2) -s(4n-2j+1).\] Moreover, because $4n-2j+2<2^{r+1}$ and is not a power of 2, it must be true that $v_2(4n-2j+2)<r$. Thus \[ v_2(6n+2)=\min \left( v_2(2n+2j),v_2(4n-2j+2) \right)=v_2(4n-2j+2).\] Substituting this, we obtain 
\[ v_2(c_j) = s(6n+2) - 1 - s(4n-2j+2).\]

By the inductive hypothesis, the minimum 2-adic valuation of the $w$-coefficients occurring in the representation of $G_{2n+2j}$ is 0, while for $4n-2j+2$ this minimum is $s(4n-2j+2)-2$. In conclusion, all $w$-coefficients occurring in the decomposition for $2c_j G_{2n+2j} G_{4n-2j+2}$ have 2-adic valuation at least 
\[ 1+\left( s(6n+2) - 1 - s(4n-2j+2) \right)+ \left(s(4n-2j+2)-2\right)=s(k)-2. \]

\noindent \textbf{No powers of 2.} Suppose that neither $2n+2j$ nor $4n-2j+2$ is a power of 2. By the inductive hypothesis, the least 2-adic valuation among the coefficients appearing in the decomposition of $G_{2n+2j}$ is $s(2n+2j)-2$, while the corresponding minimum for $G_{4n-2j+2}$ is $s(4n-2j+2)-2$. Hence, every coefficient arising from $2c_j G_{2n+2j} G_{4n-2j+2}$ has valuation at least 
\begin{align*}
 1 & + v_2(c_j)+(s(2n+2j)-2)+(s(4n-2j+2)-2).\\
 &=s(k)-3+v_2(6n+2)-v_2(2n+2j)-v_2(4n-2j+2)+s(2j-1)+s(2n-2j+1).
\end{align*}
Note that $s(x)+s(y)\ge 1+v_2(N)$ whenever $N=2x+y+1$ is not a power of 2; indeed, $s(x)+s(y)=s(2x)+s(y)\ge s(2x+y)=s(N)+v_2(N)-1\ge 1+v_2(N)$. This implies that 
\[ s(2j-1)+s(2n-2j+1)\ge 1+\max \left( v_2(2n+2j),v_2(4n-2j+2) \right).\] Thus, the combined valuation from above is at least 
\[ s(k)-2+[v_2(6n+2)-\min\left( v_2(2n+2j),v_2(4n-2j+2) \right)]\ge s(k)-2.\] In particular, if the lower-bound is not strict then we must have
\[v_2(k)=\min\left( v_2(2n+2j),v_2(4n-2j+2) \right).\]

\subsection{Case 2: \texorpdfstring{$k \equiv 0 \pmod 6$}{k=0(mod6)}}

Let $k = 6n$. This time we appeal to \cite[Theorem 2]{MerRol}
\begin{align*}
& \binom{6n+1}{2n} G_{6n}  = \notag \\ & \sum_{j=1}^n \left[ \binom{2n+2j-1}{2n}\binom{4n-2j-1}{2n} + 2\binom{2n+2j-1}{2n}\binom{4n-2j-1}{2n-2} \right] G_{2n+2j} G_{4n-2j}. 
\end{align*}

Letting $$a_j = \frac{\binom{2n+2j-1}{2n}\binom{4n-2j-1}{2n}}{\binom{6n+1}{2n}}, \quad b_j = \frac{2\binom{2n+2j-1}{2n}\binom{4n-2j-1}{2n-2}}{\binom{6n+1}{2n}},$$we can write$$G_{6n} = \sum_{j=1}^n (a_j + b_j) G_{2n+2j}G_{4n-2j}.$$ As before, we find that for $1\le j <n$
\[v_2(a_j)=s(6n) + s(2j-1) + s(2n-2j-1) - s(2n+2j-1) - s(4n-2j-1),\]and similarly, \[ v_2(b_j)=s(6n) + v_2(n) + s(2j-1) + s(2n-2j+1) - s(2n+2j-1) - s(4n-2j-1).\] Note that $v_2(b_j)-v_2(2a_j)=v_2(n)-v_2(n-j)$ and $v_2(b_{n-j})-v_2(2a_j)=v_2(n)-v_2(j).$ 

\bigskip

\noindent \textbf{Middle term}. When $n$ is even the term corresponding to $j=n/2$ is $(a_{n/2}+b_{n/2})G_{3n}^2$. Then \[v_2(a_{n/2}+b_{n/2})=v_2(a_{n/2})=2s(n)-s(3n)\ge 0,\] and by the inductive hypothesis the contribution from this middle term has valuation at least 
\[ 2s(n)-s(3n)+2(s(3n)-2)=s(3n)+2s(n)-4\ge s(6n)-2=s(k)-2,\] with equality when $n$ is a power of 2. 

\bigskip

\noindent \textbf{Boundary term.} The term corresponding to $j=n$ is $b_nG_{4n}G_{2n}$, and $v_2(b_n)=s(6n)-s(n)$. If $n$ is a power of 2 then $s(6n)-s(n)>s(k)-2$, whereas if $n$ is not a power of 2 then \[(s(6n)-s(n))+(s(4n)-2)+(s(2n)-2)=s(6n)+s(n)-4\ge s(k)-2.\] Either way, the necessary lower bound is satisfied by the induction hypothesis. Note that this is the only case when both $2n+2j$ and $4n-2j$ can simultaneously be powers of 2. 

\bigskip

\noindent Excluding the indices $j\in \{n/2,n\}$, we can pair equal summand, and the pairs take the form \[ \left( 2a_j + b_j + b_{n-j} \right) G_{2n+2j} G_{4n-2j}.\] 

\noindent \textbf{One power of 2.} When $2n+2j=2^r$ and $4n-2j$ is not a power of 2, we find that \[ v_2(2a_j + b_j + b_{n-j})=v_2(b_j)=s(6n)+v_2(n)-s(4n-2j-1).\] With the contribution from the inductive hypothesis, the minimal 2-adic valuation of the $w$-coefficients coming from $(2a_j + b_j + b_{n-j}) G_{2n+2j} G_{4n-2j}$ is
\[s(6n)+v_2(n)-v_2(4n-2j)-1=s(k)-2.\] It is important to note that such an index $j$ exists if and only if $n$ is not a power of 2. 

\bigskip

\noindent \textbf{No powers of 2.} When neither $2n+2j$ nor $4n-2j$ is a power of 2, we distinguish three possibilities depending on how $v_2(n)$ and $v_2(j)$ relate. More precisely, 
\[ v_2(2a_j+b_j+b_{n-j})=
	\begin{cases}
		v_2(2a_j) & \text{ if } v_2(n)>v_2(j),\\
		v_2(b_j) & \text{ if } v_2(n)=v_2(j),\\
		v_2(b_{n-j}) & \text{ if } v_2(n)<v_2(j).
	\end{cases}
\]
If $v_2(n)>v_2(j)$, the minimal valuation coming from $(2a_j+b_j+b_{n-j})G_{2n+2j} G_{4n-2j}$ is 
\begin{align*}
& 1+s(6n)+s(2j-1) + s(2n-2j-1) - v_2(n+j) - v_2(2n-j)-4 \notag \\
&=s(6n)+s(2j-1) + s(2n-2j-1) -2v_2(j)-3 \notag \\
&=s(k)+s(j)+s(n-j)-3 > s(k)-2.
\end{align*} 
Similarly, the total lower bound simplifies to 
\[ s(k)+s(2j-1)+s(2n-2j+1)-v_2(n+j)-4,\] if $v_2(n) = v_2(j)$, and to 
\[ s(k)+s(2n-2j-1)+s(2j+1)-v_2(2n-j)-4,\] if $v_2(n) < v_2(j)$. In each of these possibilities we use the inequality: $s(x)+s(y)\ge 1+v_2(N)$ whenever $N=2x+y+1$ is not a power of 2. It yields, in both cases, that the total valuation is again at least $s(k)-2$. 

Finally, note that once the strict case $v_2(n)>v_2(j)$ is excluded, we necessarily have
\[ v_2(k)=\min \left( v_2(2n+2j), v_2(4n-2j) \right).\]


\subsection{Case 3: \texorpdfstring{$k \equiv 4 \pmod 6$}{k=4(mod6)}}

Let $k = 6n+4$. We now apply \cite[Theorem 3]{MerRol}
\begin{align*}
& \left[ \binom{6n+3}{2n+2} + 2\binom{6n+3}{2n} \right] G_{6n+4} = \notag \\
&\sum_{j=1}^{n+1} \left[ \binom{2n+2j-1}{2n}\binom{4n-2j+3}{2n} + 2\binom{2n+2j-1}{2n}\binom{4n-2j+3}{2n+2} \right] G_{2n+2j} G_{4n-2j+4}.
\end{align*}

The coefficient on the left-hand side simplifies to $L=\binom{6n+3}{2n} \frac{6n+5}{n+1}$, which has valuation  
\[ v_2(L) = s(n) + s(n+1) + 2 - s(6n+4) - v_2(6n+4).\]
On the right-hand side, we define
$$a_j = \binom{2n+2j-1}{2n}\binom{4n-2j+3}{2n} \text{ and } b_j = 2\binom{2n+2j-1}{2n}\binom{4n-2j+3}{2n+2}.$$ 

\bigskip

\noindent \textbf{Middle term}. A middle term exists when $n$ is even, and it is of the form $(a_{n/2+1} + b_{n/2+1})G_{3n+2}^2$ with $$v_2(a_{n/2+1} + b_{n/2+1}) = v_2(a_{n/2+1})=2\left( s(n)+s(n+1)-s(3n+1) \right).$$ Combining this with the inductive hypothesis for $G_{3n+2}^2$ and subtracting $v_2(L)$ yields
\[ (s(k)-2)+(s(n)+s(n+1)-v_2(3n+2)-1).\] Writing $n=2m$, the second term above becomes $2s(m)-v_2(3m+1)-1$. Since $k=4(3m+1)$ is not a power of 2,  the alternating-binary argument used in the middle-term case for $k=6n+2$ gives $v_2(3m+1)\le 2s(m)-2$. Hence, the middle-term contribution is $>s(k)-2$. 

\bigskip 

For $j\neq n/2+1$, pairing the symmetric terms yields a similar tripartite framework:$$(2a_j + b_j + b_{n+2-j}) G_{2n+2j} G_{4n-2j+4},$$ and it can be checked that 
\[ v_2( 2a_j + b_j + b_{n+2-j}) = v_2(a_j)+1-v_2(n+1).\]

\bigskip 

\noindent \textbf{One power of 2.}  When $2n+2j$ is a power of 2, we find that
\[ v_2(a_j)+1-v_2(n+1)= 1 + s(n+1) + s(2n-2j+1) - s(4n-2j+3),\] which combined with the contributions from the inductive hypothesis gives
\[
V :=   s(n+1) + s(2n-2j+1) - v_2(4n-2j+4).
\]
Subtracting the valuation of $L$ yields
$$V-v_2(L) = s(6n+4) - 2 + \left[ s(2n-2j+1) - s(n) \right] - \left[ v_2(4n-2j+4) - v_2(6n+4) \right].$$ It is not hard to see that both of these brackets evaluate to 0, showing that $V-v_2(L) = s(k) - 2$ and also that $v_2(k)= \min \left( v_2(2n+2j), v_2(4n-2j+4) \right).$

\bigskip

\noindent \textbf{No powers of 2.} Suppose that neither $2n+2j$ nor $4n-2j+4$ is a power of 2. The lower bound for the contribution arising from $(2a_j + b_j + b_{n+2-j}) G_{2n+2j} G_{4n-2j+4}$ after subtracting $v_2(L)$ is
\[ V':=s(k)-4+v_2(k)+s(2j-1)+s(2n-2j+3)-v_2(2n+2j)-v_2(4n-2j+4).\] Clearly $s(2j-1)+s(2n-2j+3) = 2 + s(j-1)+s(n-j+1)$. As in the previous cases, we use that $s(x)+s(y)\ge 1+v_2(N)$ when $N=2x+y+1$ is not a power of 2. Setting $x=j-1$ and $y=n-j+1$ (and vice versa) yields
\[ s(j-1)+s(n-j+1)\ge \max(v_2(2n+2j),v_2(4n-2j+4)).\]
Hence $s(2j-1)+s(2n-2j+3)\ge 2+\max(v_2(2n+2j),v_2(4n-2j+4)),$ and so
\[ V'\ge s(k)-2+v_2(k)-\min(v_2(2n+2j),v_2(4n-2j+4)).\] In conclusion $V'\ge s(k)-2$, and if this lower bound is not strict then necessarily
\[ v_2(k)=\min(v_2(2n+2j),v_2(4n-2j+4)).\]

\section{Attaining the lower bound} \label{attain-min}

We now prove that the lower bounds established above can be attained. Set

\[
\lambda(k)=
\begin{cases}
0, & k\text{ is a power of }2,\\
s(k)-2, & k\text{ is not a power of }2,
\end{cases}
\qquad
\mu(k)=
\begin{cases}
0, & k\text{ is a power of }2,\\
2^{v_2(k)-1}, & k\text{ is not a power of }2.
\end{cases}
\]
Whenever $4\mid k-6b$, we shall denote by $W_{k,b}$ the coefficient of  $G_4^{(k-6b)/4}G_6^b$ in $G_k$, i.e., 
\[ W_{k,b}:= w\left(\frac{k-6b}{4},k\right). \]
In particular, $ v_2(W_{k,b})\ge \lambda(k)$ by the preceding sections. 

\begin{proposition} \label{Min-Witness}
Let $k\ge 6$ be even and not a power of 2. 

	\begin{enumerate}
		\item[(a)] Suppose $k$ is not of the form $6n$ with $n$ a power of 2. Then $v_2(W_{k,\mu(k)})=\lambda(k), $ and if \(b<\mu(k)\) then $v_2(W_{k,b})>\lambda(k).$
		\item[(b)] Suppose $k=6n$, where $n$ is a power of 2. Then 
			$v_2(W_{k,n})=0$, $v_2(W_{k,0})=1,$ and for $b\notin\{0,n\}$ we have $v_2(W_{k,b})>1$. 
	\end{enumerate}
\end{proposition}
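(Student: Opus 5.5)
We will prove both parts by strong induction on $k$, reusing the three lacunary recurrences of the previous section (Romik's identity for $k=6n+2$ and \cite[Theorems 2, 3]{MerRol} for $k=6n$, $k=6n+4$). We track the coefficient of the monomial $G_4^{(k-6b)/4}G_6^{b}$ with $b\le \mu(k)$. The guiding observation is this. In a product $G_{k_1}G_{k_2}$ with $k_1+k_2=k$, the coefficient of $G_4^{\ast}G_6^{b}$ is $\sum_{b_1+b_2=b}W_{k_1,b_1}W_{k_2,b_2}$. So if we know, inductively, the smallest $G_6$-exponent $\mu(k_i)$ at which the minimal valuation $\lambda(k_i)$ is attained, then the product attains its minimal valuation only at $b\ge \mu(k_1)+\mu(k_2)$, with a unique witness at equality. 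For a power of 2 we use Proposition~\ref{Powers-of-2}: $W_{2^r,0}$ is the unique coefficient of valuation 0, consistent with $\mu=0$.

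Step 1 is to read off, from the case analysis of Section 3, exactly which summands can reach the bound $s(k)-2$. The middle terms are strictly above the bound, except for $k=6n$ with $n$ a power of 2, where the middle term $G_{3n}^2$ attains it and where the boundary term $b_nG_{4n}G_{2n}$ also matters. The ``no powers of 2'' summands reach the bound only when $v_2(k)=\min(v_2(k_1),v_2(k_2))$. The ``one power of 2'' summands always reach it and also force $v_2(k)=\min(v_2(k_1),v_2(k_2))$. If $v_2(k_1)\ne v_2(k_2)$, the minimum equals $v_2(k)$. If they are equal, then $v_2(k)>v_2(k_i)$, which is excluded. So for every extremal summand exactly one factor, say $k_1$, has $v_2(k_1)=v_2(k)$, and the other has $v_2(k_2)>v_2(k)$. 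Hence $\mu(k_1)=2^{v_2(k)-1}=\mu(k)$ and $\mu(k_2)\ge 2\mu(k)$, or $\mu(k_2)=0$ when $k_2$ is a power of 2.

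Step 2 handles part (a). For $b<\mu(k)$, every extremal summand contributes only terms of valuation $>\lambda(k)$, so $v_2(W_{k,b})>\lambda(k)$. At $b=\mu(k)$, only extremal summands whose larger-valuation factor $k_2$ is a power of 2 contribute a minimal term, namely $W_{k_1,\mu(k)}W_{k_2,0}$. Those with $\mu(k_2)\ge 2\mu(k)>\mu(k)$ do not. So we must count the ``one power of 2'' pairs $\{k_1,2^r\}$ with $k_1+2^r=k$ and $2^r>2^{v_2(k)}$, $k_1$ not a power of 2, lying in the summation range. We expect to show that there is exactly one such unordered pair in the range: the largest power of 2 not exceeding the allowed interval $[2n+2,4n]$, or its analogue. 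The point to verify is that the condition $v_2(2^r)>v_2(k)$ singles out one $r$, or an odd number of them. We must also check that the paired coefficient $2c_j$, etc., has exactly the valuation computed in Section 3, so the contribution is a unit times $2^{\lambda(k)}$. In the special case where $k_1$ is itself of the form $6n'$ with $n'$ a power of 2, the induction must use part (b) in place of (a). This needs a separate check, since there $\mu$ is replaced by $n'$ and a valuation-1 term appears at $b=0$.

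Step 3 handles part (b), $k=6n$ with $n=2^r$. Here $s(k)-2=0$, no ``one power of 2'' summands exist, and the middle term $a_{n/2}G_{3n}^2$ has coefficient valuation $2s(n)-s(3n)=0$. The factor $G_{3n}$ is again of type (b), so by induction $W_{3n,n/2}$ is the unique unit. Squaring, as in Proposition~\ref{Powers-of-2}, gives a unique odd coefficient at $b=n$. For $b=0$ we collect the valuation-1 contributions. These come from the boundary term $b_nG_{4n}G_{2n}$, where $v_2(b_n)=s(6n)-s(n)=1$ and the factors $W_{4n,0},W_{2n,0}$ are units. We then show that all other summands have valuation $\ge 2$ at $b=0$ and at each $b\notin\{0,n\}$. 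This requires the refined ``second valuation level'' information: for $G_{3n}^2$, the cross terms pair up and gain a factor 2 on top of the inductive bound $v_2(W_{3n,b})\ge1$ for $b\ne n/2$, and the generic summands are strictly above 0, hence at least 1, and must be shown to be at least 2 off $b=n$. I expect this second-level bookkeeping, together with the uniqueness count of extremal pairs in Step 2, to be the main obstacle. For it I would first try to strengthen the Section 3 inequalities by tracking the exact slack $v_2(k)-\min(v_2(k_1),v_2(k_2))$ and the carry count.
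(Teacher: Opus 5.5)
Your plan is essentially the paper's proof. In (a), a unique distinguished summand $dG_PG_Q$, with $P$ a power of $2$ and $v_2(P)>v_2(k)$, supplies the witness $W_{P,0}W_{Q,\mu(Q)}$, and every other extremal summand has $\mu(A)+\mu(B)>\mu(k)$; your derivation of this from $v_2(k)=\min(v_2(A),v_2(B))$ spells out what the paper leaves implicit. In (b), the middle, boundary and paired terms are treated as you describe, and your squaring argument for $G_{3n}^2$ needs only that $W_{3n,n/2}$ is the unique unit, rather than the full second-level statement for $3n$ that the paper invokes. The two points you defer are routine: each index interval such as $[2n+2,4n]$ has endpoint ratio below $2$ and, in the situation of (a), contains a power of $2$, hence exactly one, so the distinguished summand is unique; and for $n$ a power of $2$ the Section~3 strict-case bound $s(k)+s(j)+s(n-j)-3=s(j)+s(n-j)-1$ is at least $2$ whenever $j\ne n/2$.
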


\begin{proof}

We argue by induction on $k$.

(a)  After pairing symmetric terms, there is a distinguished
summand of the form $ dG_PG_Q,$ where $P$ is a power of 2 $Q=k-P$, and $ v_2(k)<v_2(P).$ Therefore $v_2(Q)=v_2(k)$ and $\mu(Q)=\mu(k)$. Moreover, the scalar $d$ satisfies $ v_2(d)+\lambda(Q)=\lambda(k). $

For every other summand  $d'G_AG_B$, with $A+B=k$, the lower-bound analysis gives either
\[
v_2(d')+\lambda(A)+\lambda(B)>\lambda(k),
\]
or else equality holds and $\mu(A)+\mu(B)>\mu(k).$

Expanding
\[ dG_PG_Q=d \left( \sum_u W_{P,u} G_4^{(P-6u)/4} G_6^u \right) \left( \sum_v W_{Q,v} G_4^{(Q-6v)/4} G_6^v \right),\] 
we see that its contribution to \(W_{k,\mu(k)}\) is $d\sum_{u+v=\mu(k)}W_{P,u}W_{Q,v}. $ The term with $u=0$ and $v=\mu(k)=\mu(Q)$ is $dW_{P,0}W_{Q,\mu(Q)}$. Since $P$ is a power of 2, $W_{P,0}$ is a 2-adic unit. By the induction hypothesis, $v_2(W_{Q,\mu(Q)})=\lambda(Q).$ Therefore this term has valuation $v_2(d)+v_2(W_{P,0})+v_2(W_{Q,\mu(Q)}) = v_2(d)+\lambda(Q) = \lambda(k). $ Every other way to obtain total $G_6$-degree $\mu(k)$ has $u>0$ and $v=\mu(k)-u<\mu(k)=\mu(Q).$ Since $P$ is a power of 2, $W_{P,u}$ has positive valuation for $u>0$.  Also, by induction, $v_2(W_{Q,v})>\lambda(Q)$ whenever $v<\mu(Q)$.  Hence every term with  $u>0$ contributes with valuation strictly larger than $v_2(d)+ \lambda(Q)=\lambda(k).$ Thus the total contribution of the distinguished summand to $W_{k,\mu(k)}$ has valuation exactly $\lambda(k)$.

Now, consider a non-distinguished summand $d'G_AG_B$, with $A+B=k$. Its contribution to $W_{k,\mu(k)}$ is $d'\sum_{u+v=\mu(k)}W_{A,u}W_{B,v}$. If
$ v_2(d')+\lambda(A)+\lambda(B)>\lambda(k),$ then every individual term $d'W_{A,u}W_{B,v}$ has valuation \(>\lambda(k)\), because $v_2(W_{A,u})\ge \lambda(A)$, and $v_2(W_{B,v})\ge \lambda(B).$ Therefore this whole summand contributes to \(W_{k,\mu(k)}\) with valuation strictly larger than  $\lambda(k)$.

The remaining possibility is $v_2(d')+\lambda(A)+\lambda(B)=\lambda(k).$
By the equality analysis from the preceding sections: $\mu(A)+\mu(B)>\mu(k).$ For every pair $u,v$ with $u+v=\mu(k),$ we must have either
$ u<\mu(A)$  or $v<\mu(B).$ By the induction hypothesis, this implies either
$v_2(W_{A,u})>\lambda(A)$ or $v_2(W_{B,v})>\lambda(B).$ Thus
$ v_2(W_{A,u}W_{B,v})>\lambda(A)+\lambda(B),$
and hence $v_2(d'W_{A,u}W_{B,v})>\lambda(k).$ So this non-distinguished summand also contributes to $W_{k,\mu(k)}$ with valuation strictly larger than $\lambda(k)$. Therefore the only contribution to $W_{k,\mu(k)}$ with valuation exactly $\lambda(k)$ comes from the distinguished summand, and so $v_2(W_{k,\mu(k)})=\lambda(k).$

The same argument proves the second assertion.  Indeed, if $b<\mu(k)$, then in the distinguished summand every decomposition $u+v=b$ either has $u>0$, or has $u=0$ and $v=b<\mu(Q)$.  In both cases the contribution has valuation strictly larger than $\lambda(k)$.  For every non-distinguished summand, the same argument as above applies, since $ b<\mu(k)<\mu(A)+\mu(B) $ in the equality case.  Hence $v_2(W_{k,b})>\lambda(k) $ for all \(b<\mu(k)\).

\bigskip

(b) The case $k=6$ is immediate, so assume $n\ge 2$. We use the notation from the $k=6n$ identity. This time, the distinguished summand is the middle term $ dG_{3n}^2$, with $d=a_{n/2}+b_{n/2}$, and $v_2(d)=2s(n)-s(3n)=0.$ Also, note that $\mu(3n)=n/2$ and $\lambda(3n)=0.$

By induction, the coefficient $W_{3n,n/2}$ has valuation 0, while every coefficient $W_{3n,u}$ with $u<n/2$ has positive valuation. The contribution of the middle term to $W_{6n,n}$ contains the term $d\,W_{3n,n/2}^2,$ which has valuation 0. All other decompositions $u+v=n$ have at least one of $u,v$ smaller than $n/2$, and hence contribute with positive valuation.  Thus $v_2(W_{6n,n})=0.$

Next consider $W_{6n,0}$.  The middle term contributes $ d\,W_{3n,0}^2,$ which has valuation at least 2, since $v_2(W_{3n,0})>0$.  The boundary term is $b_nG_{4n}G_{2n},$ and, as computed in the $k=6n$ case, $v_2(b_n)=s(6n)-s(n)=1.$ Both $4n$ and $2n$ are powers of 2, so their pure $G_4$-coefficients are 2-adic units.  Therefore the boundary term contributes to $W_{6n,0}$ with valuation exactly 1. For the remaining paired terms $d_jG_{2n+2j}G_{4n-2j}$, with  $d_j=2a_j+b_j+b_{n-j}$ and $j\neq n/2,n$, the computations in the $k=6n$ section give $v_2(d_j)=1.$ Moreover, the two lower weights cannot both have $\lambda=0$.  Indeed, this would force both $j$ and $n-j$ to be powers of 2; hence $j=n/2$, the excluded middle case.  Therefore every coefficient coming from $G_{2n+2j}G_{4n-2j}$ has positive valuation, and so these paired terms contribute to \(W_{6n,0}\) with valuation at least \(2\). Hence $v_2(W_{6n,0})=1.$

Finally, suppose $b\notin\{0,n\}$. The boundary term contributes with valuation $>1$, since producing positive $G_6$-degree requires a positive $G_6$-degree coefficient from $G_{4n}$ or $G_{2n}$, and all such coefficients have positive valuation. The remaining paired terms again have valuation $1$ and product contribution of positive valuation, hence contribute with valuation $>1$. 

It remains to consider the middle term $dG_{3n}^2$. By induction applied to
$3n=6(n/2)$, we have $v_2(W_{3n,n/2})=0$, $v_2(W_{3n,0})=1,$ and $v_2(W_{3n,u})>1$ for every admissible $u\notin\{0,n/2\}$. Thus, for $0<b<n$, every decomposition $u+v=b$ gives valuation $>1$, except possibly when $b=n/2$. In that case, the two borderline terms occur symmetrically and
combine as $2dW_{3n,0}W_{3n,n/2}$, which has valuation at least $2$. Therefore, the middle term also contributes
with valuation $>1$. Hence $v_2(W_{6n,b})>1$ whenever $0<b<n$.

\end{proof}

\section{An application to Faber polynomials} \label{application}

Let $M_k$ be the space of modular forms of even weight $k\ge 4$ and level one. Write \[k=12D+k'\] for unique $D\ge 0$ and \[k'\in\{0,4,6,8,10,14\}.\] 

For every $f\in M_k$, we have that the quotient \[ \frac{f}{\Delta^D E_k'}\] is a polynomial in $j$; we refer to this polynomial as the Faber polynomial associated to $f$. Numerical evidence (see \cite{Gekeler}) suggests that the Faber polynomial associated to each $E_k$ is irreducible, and Gonz\'alez confirmed that observation when $k=12D$ and $D$ is a power of 2. The technical idea behind Gonz\'alez's proof is that the 2-adic valuation of $w(0,k)$ is exactly 0, while all other coefficients have strictly positive valuation. 

In this section we prove a similar phenomenon for the form $E_k^2 + E_{2k}$, whose zeros were studied in \cite{klangwang}. After squaring $E_k$, the coefficients of the resulting Faber polynomial involve convolutions of the coefficients coming from $E_k$, and Gonz\'alez's bound alone is insufficient. Instead, we make use of of the sharper information provided by Proposition~\ref{Min-Witness}. 

We now recall the first irreducibility result stated in the introduction.

\medskip

\noindent\textbf{Theorem~\ref{FaberIrred}.}
\textit{Let $k=12D$ with $D$ a power of 2. Then the Faber polynomial associated to $E_k^2+E_{2k}$ is irreducible over $\mathbb{Q}$. }

\medskip

\begin{proof}
Substituting \[G_4=2\zeta(4)E_4=\frac{\pi^4}{45}E_4\] and \[G_6=2\zeta(6)E_6=\frac{2\pi^6}{945}E_6\] in the expression 
\[E_k=\frac{1}{2\zeta(k)} \sum_{a=0}^D w(3a,k)G_4^{3a} G_6^{2(D-a)}\] yields
\[ E_k=\frac{\pi^k}{2 \zeta(k)}\sum_{a=0}^D w(3a,k) \frac{2^{2(D-a)}}{45^{3a} ~ 945^{2(D-a)}} E_4^{3a} E_6^{2(D-a)}.\]
Next, we note that 
\[ \frac{E_4^{3a}E_6^{2(D-a)}}{\Delta^D}=j^a (j-1728)^{D-a}=\sum_{r=a}^D\binom{D-a}{D-r} (-1728)^{D-r}j^r.\] It follows that, for $0\le r<D$, the coefficient of $j^r$ in $E_k/\Delta^D$ is 
\[ t_{k,r} := \frac{\pi^k}{\zeta(k)} (-1)^{D-r} \sum_{a=0}^r w(3a,k) \frac{2^{8D-6r-2a-1}}{3^{3D+3r}5^{2D+a}7^{2D-2a}} \binom{D-a}{D-r}.
\] Since the Faber polynomial associated to $E_k$ is monic, we also set $t_{k,D}=1$. 

Recall that \[ 2 \zeta(k) =-\frac{(2 \pi i)^k}{k!} B_k,\] where $B_k$ is the $k$-th Bernoulli number. Since $k=12D$ with $D$ a power of 2, we have that $s(k)=2$, and hence $v_2(k!)=k-s(k)=k-2$. Moreover $v_2(B_k)=-1$, by the von Staudt--Clausen Theorem. Therefore $v_2(\pi^k/\zeta(k))=0$.   

From Proposition~\ref{Min-Witness}, we know that $v_2(w(0,k))=0$, $v_2(w(3D,k))=1$ and for all $a\notin\{0,3D\}$ we have $v_2(w(a,k))\ge 2$. This implies that \[v_2(t_{k,0}) = 8D-1.\]

Let $0<r<D$. The $a=0$ summand has valuation at least
\[ 8D-6r-1 \ge 8(D-r)+1.\]
For $a>0$, using $v_2(w(3a,k))\ge 2$ and $a\le r$, the corresponding
summand has valuation at least
\[ 2+8D-6r-2a-1\ge 8(D-r)+1. \]
Consequently, for all $0<r<D$ we have
\[ v_2(t_{k,r})\ge 8(D-r)+1.\]

The Faber polynomial associated to the modular form $E_k^2+E_{2k}\in M_{2k}$ has degree \[d:=\frac{2k}{12}=2D,\] and leading coefficient 2. For convenience, we normalize it so that it becomes monic, and let
\[ \frac{1}{2}\frac{E_k^2+E_{2k}}{\Delta^d}=j^d+\sum_{r=0}^{d-1} c_{k,r}j^r,\] Since $c_{k,0}=\frac{1}{2}\left( t_{2k,0}+t_{k,0}^2 \right)$, we find that \[v_2(c_{k,0})=16D-3=8d-3.\] 
Further, for $0<r<d$, we have
\[ c_{k,r}=\frac{1}{2} \left(t_{2k,r}+\sum_{u+v=r} t_{k,u}t_{k,v} \right).\] We consider three ranges.

\begin{enumerate}
\item If $0<r<D$, the endpoints terms in the summation are $t_{k,0}t_{k,r}+t_{k,r}t_{k,0}=2t_{k,0}t_{k,r}$, so they have valuation at least
\[ 1+(8D-1)+(8(D-r)+1)=8(d-r)+1.\] All other terms have valuation at least 
\[ (8(D-u)+1)+(8(D-r+u)+1)=8(d-r)+2.\] Since also $v_2(t_{2k,r})\ge 8(d-r)+1$, we infer that 
\[ v_2(c_{k,r})\ge 8(d-r).\] 
\item If $r=D$, the endpoint contribution is $2t_{k,0}t_{k,D}$, which has valuation \[1+(8D-1)+0=4d.\] The remaining terms have valuation at least $8D+2$, and $v_2(t_{2k,D})\ge 8D+1$. Therefore \[v_2(c_{k,D})\ge 4d-1.\] 
\item If $D<r<2D$, the endpoint terms are $t_{k,D}t_{k,r-D}+t_{k,r-D}t_{k,D}=2t_{k,D}t_{k,r-D}$. Since $0<r-D<D$, their valuation is at least 
\[ 1+0+(8(D-(r-D))+1)=8(d-r)+2.\] Meanwhile, $v_2(t_{2k,r})\ge 8(d-r)+1$ so
\[ v_2(c_{k,r})\ge 8(d-r).\] 
\end{enumerate}

The above analysis shows that for all $0<r<d$ we have
\[ \frac{v_2(c_{k,r})}{d-r} > \frac{v_2(c_{k,0})}{d}.\] Moreover, since $D$ is a power of 2, so is $d$, and hence $v_2(c_{k,0})=8d-3$ is coprime to $d$. By Dumas's criterion \cite[Corollary 1.3]{Dumas}, the Faber polynomial associated to $E_{k}^2+E_{2k}$ is irreducible over $\mathbb{Q}$. 
\end{proof}

\bigskip

It is useful to reinterpret the last part of the proof in terms of Newton
polygons. Let
\[
P(x)=x^d+\sum_{r=0}^{d-1}c_rx^r\in \mathbb Q[x],
\]
and put $h=v_2(c_0)$. Suppose that for $0<r<d$ we have
\[ v_2(c_r)\ge \frac{h(d-r)}{d}\]
and that $\gcd(h,d)=1$. Then the 2-adic Newton polygon of $P$ consists of a
single segment joining $(0,h)$ to $(d,0)$. Indeed, since $h$ and $d$
are coprime, this segment contains no lattice points other than its endpoints, so the above inequalities force all intermediate points to lie strictly above the segment. Dumas's criterion then implies that $P$ is irreducible over $\mathbb Q_2$, and hence over $\mathbb Q$.

A similar Newton polygon argument applies to a broader class of integral linear combinations of Eisenstein forms.  The hypotheses below ensure two things: first, that the resulting Faber polynomial can be normalized by dividing by a number of 2-adic valuation exactly $1$, and second, that its constant coefficient has a unique term of minimal valuation.

\bigskip

\noindent\textbf{Theorem~\ref{t:linearcombo}.}
\textit{Let $k=12D$ where $D$ is a power of 2. Suppose \[0\leq a_1 <a_2<\cdots <a_t\leq \ell\] and $m_1,\ldots, m_t$ are integers such that $m_1$ and $\frac{1}{2}\sum_{j=1}^t m_j$ are odd integers. Then the Faber polynomial associated to the weight $2^\ell k$ modular form \[\sum_{j=1}^t m_j E_{2^{a_j}k}^{2^{\ell-a_j}}\] is irreducible over $\mathbb{Q}$. }

\begin{proof}
Let
\[ D_j=2^{a_j}D,\qquad N_j=2^{\ell-a_j},\qquad d=2^\ell D, \]
so that $N_jD_j=d$. Let
\[ T_j(x)=\frac{E_{2^{a_j}k}}{\Delta^{D_j}} =\sum_{u=0}^{D_j} t_{2^{a_j}k,u}x^u. \]
Then the Faber polynomial of
\[
f=\frac{1}{M}\sum_{j=1}^t m_jE_{2^{a_j}k}^{N_j},
\qquad M=\sum_{j=1}^t m_j,
\]
is
\[
F_f(x)=\frac{1}{M}\sum_{j=1}^t m_j T_j(x)^{N_j}
      =x^d+\sum_{r=0}^{d-1}c_rx^r.
\]
By assumption $M/2$ is odd, so $v_2(M)=1$.

From the proof of Theorem~\ref{FaberIrred}, applied to the weight
$2^{a_j}k=12D_j$, we have
\[ v_2(t_{2^{a_j}k,0})=8D_j-1, \]
\[ v_2(t_{2^{a_j}k,u})\ge 8(D_j-u)+1 \qquad (0<u<D_j),
\]
and
\[ t_{2^{a_j}k,D_j}=1. \]

We first compute the constant coefficient. We have
\[
c_0=\frac{1}{M}\sum_{j=1}^t m_jt_{2^{a_j}k,0}^{N_j}.
\]
For each \(j\),
\[
v_2\left(m_jt_{2^{a_j}k,0}^{N_j}\right)
\ge
N_j(8D_j-1)=8d-N_j.
\]
Since $m_1$ is odd, the \(j=1\) term has valuation exactly
\[
8d-N_1=8d-2^{\ell-a_1},
\]
while all other terms have strictly larger valuation. Hence
\[
v_2(c_0)=8d-N_1-v_2(M)=8d-N_1-1.
\]
Set
\[ h:=8d-N_1-1. \]
Clearly $\gcd(h,d)=1,$ so it remains to prove that all intermediate coefficients lie on or above the
line joining \((0,h)\) and \((d,0)\). Equivalently, we need for $0<r<d$
\[ v_2(c_r)\ge \frac{h(d-r)}{d}.  \]

Fix $j$, and let $S_{j,r}$ be the coefficient of $x^r$ in
$T_j(x)^{N_j}$. It suffices to prove
\[ v_2(S_{j,r})-1\ge \frac{h(d-r)}{d}.  \]

Write $N=N_j=2^m$ and $D'=D_j$, so that $ND'=d$. Consider a contribution
to $S_{j,r}$ coming from a multiset of indices $u_1,\ldots,u_N$, $0\le u_i\le D'$,  with
\[ u_1+\cdots+u_N=r. \]
Let $A$ be the number of indices equal to $0$, let $B$ be the number
equal to $D'$, and let $C=N-A-B$ be the number of remaining indices. Denote
the remaining indices by $v_1,\ldots,v_C$, where $0<v_i<D'$.

Using the valuation estimates for the coefficients $t_{2^{a_j}k,u}$, such a
grouped contribution has valuation at least
\[ 
v_2\left(\binom{N}{A,B,C}\right) + A(8D'-1) + \sum_{i=1}^C \bigl(8(D'-v_i)+1\bigr).
\]
Since
\[
r=BD'+v_1+\cdots+v_C
\]
and $d=ND'$, this lower bound becomes
\[
v_2\left(\binom{N}{A,B,C}\right)+8(d-r)-A+C.
\]

We claim that
\[
v_2\left(\binom{N}{A,B,C}\right)+C\ge \frac{B+C}{N}.
\]
Indeed, the right-hand side is at most $1$. If $C>0$, then the left-hand
side is at least $1$. If $C=0$, then the left-hand side is
$v_2\left(\binom{N}{A}\right)$. Since $N$ is a power of $2$, this is positive unless
$A=0$ or $A=N$. The case $A=N$ gives $r=0$, while the case $A=0$
gives $r=d$, both excluded. Therefore the claim holds for $0<r<d$.

Now
\[
r\le (B+C)D',
\]
so
\[
\frac{r}{d}\le \frac{B+C}{N}.
\]
Using the previous claim, we get
\[
v_2\left(\binom{N}{A,B,C}\right)\textsc{}+8(d-r)-A+C
\ge
8(d-r)-N+(N+1)\frac{r}{d}.
\]
Thus
\[
v_2(S_{j,r})\ge
8(d-r)-N_j+(N_j+1)\frac{r}{d}
\ge 8(d-r)-N_1+(N_1+1)\frac{r}{d}.
\]
After subtracting $v_2(M)=1$, this gives
\[
v_2(c_r)\ge
8(d-r)-N_1-1+(N_1+1)\frac{r}{d}.
\]
But the right-hand side is exactly
\[
\frac{(8d-N_1-1)(d-r)}{d}
=
\frac{h(d-r)}{d}.
\]

Thus every point $(r,v_2(c_r))$, $0<r<d$, lies above the line segment
joining $(0,h)$ and $(d,0)$, and by Dumas's criterion, $F_f$ is irreducible over $\mathbb Q$.
\end{proof}

\bibliographystyle{alpha}

\bibliography{ref}

\end{document}